\newcommand{\PGL}{\mathop{\mathrm{PGL}}}
\newcommand{\Var}{\mathop{\mathrm{Var}}}
\newcommand{\Alg}{\mathop{\mathrm{Alg}}}
\newcommand{\CH}{\mathop{\mathrm{CH}}}
\newcommand{\Ch}{\mathop{\mathrm{Ch}}}
\newcommand{\Corr}{\mathop{\mathrm{Corr}}}
\newcommand{\CR}{\mathop{\mathrm{CR}}}
\newcommand{\CM}{\mathop{\mathrm{CM}}}
\newcommand{\End}{\mathop{\mathrm{End}}}
\newcommand{\Hom}{\mathop{\mathrm{Hom}}}
\newcommand{\Spec}{\mathop{\mathrm{Spec}}}
\newcommand{\coeff}{\mathop{\mathrm{coeff}}}
\newcommand{\mult}{\mathop{\mathrm{mult}}}
\newcommand{\SB}{\mathop{\mathrm{SB}}}
\newtheorem{thm}{Theorem}[section]
\newtheorem{notation}[thm]{Notation}
\newtheorem{df}[thm]{Definition}
\newtheorem{prop}[thm]{Proposition}
\newtheorem{lem}[thm]{Lemma}
\newtheorem{cor}[thm]{Corollary}
\newtheorem{conjecture}{Conjecture}
\newtheorem{question}{Question}
\begin{document}

\title{Motivic rigidity of Severi-Brauer varieties\footnote{\samepage \textit{2010 Mathematics subject classification} : 14C15, 12E15
\newline\indent
\textit{Keywords} : Grothendieck motives, upper motives, central simple algebras, Severi-Brauer varieties.}}
\author{Charles De Clercq}
\date{}
\maketitle

\begin{abstract}Let $D$ be a central division algebra over a field $F$. We study in this note the rigidity of the motivic decompositions of the Severi-Brauer varieties of $D$, with respect to the ring of coefficients and to the base field. We first show that if the ring of coefficient is a field, these decompositions only depend on its characteristic. In a second part we show that if $D$ remains division over a field extension $E/F$, the motivic decompositions of several Severi-Brauer varieties of $D$ remain the same when extending the scalars to $E$.
\end{abstract}


\section*{Introduction}

The purpose of this note is to investigate the rigidity of the motivic decompositions of projective homogeneous varieties for projective linear groups. In the first part we provide some results on the behaviour of those decompositions with respect to the ring of coefficients. We show that the motivic decomposition of a projective homogeneous variety $X$ with coefficients in $\mathbb{F}_p$ lifts to the motivic decomposition of $X$ with coefficient in any finite field of characteristic $p$. Together with the previous works of Petrov, Semenov, Zainoulline \cite{Jinv} and Vishik, Yagita \cite{vishyag}, this implies that to study the motivic decomposition of a projective homogeneous variety $X$, it is sufficient to work with $\mathbb{F}_p$-coefficients.

Another open problem is to understand the link between the motive of the Severi-Brauer varieties and the Schur index of the underlying central simple algebras. Let $D$ be a (central) division algebra over a field $F$ and $E/F$ a field extension. It is known that if $D_E$$=$$D\otimes_F E$ does not remain a division algebra, the motivic decompositions of the non-trivial Severi-Brauer varieties of $D$ ramify when extending the scalars to $E$. The converse of this assertion, which asserts that the indecomposable motives of the Severi-Brauer varieties of $D_E$ are defined over $F$, is however an open problem.

\begin{conjecture}\label{conj}Let $D$ be a division algebra over $F$ and $E/F$ be a field extension such that $D_E$ remains a division algebra. The motivic decompositions with coefficients in a finite field of the Severi-Brauer varieties of $D$ lift over $E$.
\end{conjecture}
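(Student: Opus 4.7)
The plan is to combine the reduction to $\mathbb{F}_p$-coefficients (the first main result of this paper) with Karpenko's theory of upper motives. First I would invoke that reduction to assume the coefficient ring is $\mathbb{F}_p$ for some prime $p$; moreover, only primes dividing $\ind(D)$ can produce non-trivial decompositions since $D$ is division, so I may further assume $p \mid \ind(D)$. The Krull--Schmidt theorem for Chow motives with $\mathbb{F}_p$-coefficients guarantees that the decomposition of $M(\SB_k(D))$ is essentially unique, and Karpenko's theorem on upper motives of projective homogeneous varieties under $\PGL_1(D)$ identifies each indecomposable summand with a Tate twist of an upper motive of the form $U(\SB_j(D))$ for some $j$.

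Since $D_E$ remains a division algebra, $\ind(D_E) = \ind(D)$, so the combinatorial data dictating which upper motives and which Tate twists occur in $M(\SB_k(D))$ and in $M(\SB_k(D_E))$ are formally identical. The conjecture therefore reduces to proving that, for each relevant $j$, the upper motive $U(\SB_j(D))$ remains indecomposable after base change to $E$ and coincides there with $U(\SB_j(D_E))$. Equivalently, one must show that the primitive idempotent $\pi \in \End(M(\SB_j(D)))$ cutting out $U(\SB_j(D))$ stays primitive when extended to $\End(M(\SB_j(D_E)))$, and dually that every primitive idempotent over $E$ admits a rational lift to $F$.

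The natural technical tool is Karpenko's description of rational cycles on products $\SB_j(D) \times \SB_k(D)$ via characteristic classes that depend only on $\ind(D)$; combined with the $p$-adic combinatorics (binomial coefficients controlling multiplicities of upper motives), this should match the two decompositions abstractly. The main obstacle, and the reason the statement is merely a conjecture, is precisely the rationality direction: even though the Schur index is unchanged, nothing formal forbids \emph{new} idempotents from appearing in $\End(M(\SB_k(D_E)))/p$ that are not already defined over $F$. Controlling this, i.e.\ ensuring that no extra motivic correspondences become rational over $E$, is the hard part. I expect the paper to bypass this issue case by case for certain families of $\SB_k(D)$ (as announced in the abstract), by exploiting specific features of their correspondence rings; a uniform argument covering all $k$ would likely demand a genuine descent statement for motives under index-preserving extensions, which is not known in general.
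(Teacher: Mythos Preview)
You correctly recognize that this is a conjecture the paper does \emph{not} prove in full generality, and your outline of the obstacles is accurate: the reduction to $\mathbb{F}_p$-coefficients is exactly the content of the first part of the paper, and the residual problem is indeed whether the upper motives $U(\SB_{p^j}(D))$ stay indecomposable over~$E$. One phrasing in your proposal is misleading, though: the sentence ``the combinatorial data dictating which upper motives and which Tate twists occur in $M(\SB_k(D))$ and in $M(\SB_k(D_E))$ are formally identical'' is not an a~priori fact depending only on $\ind(D)$; it is a \emph{consequence} of the indecomposability statement via Krull--Schmidt, not an independent input. As written it reads as if you are assuming part of what has to be shown.

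Where the paper goes beyond your outline is in its concrete mechanism for the special cases it does settle. Rather than attacking indecomposability of all $U(\SB_{p^j}(D))_E$ directly, the paper isolates a tractable subclass: a Severi--Brauer variety is declared to be \emph{of type~$0$} if every indecomposable summand other than its own upper motive is a Tate twist of $M(\SB_1(D))$. The key technical lemma is then a rationality statement for cycles on the product $\SB_1(D)\times\SB_{p^k}(D)$: using the Grassmann-bundle description of this product over $\SB_1(D)$ together with Karpenko's computation of $\bar{\Ch}(\SB_1(D))$, one shows that when $D_E$ stays division every $E$-rational cycle on $\overline{\SB_1(D)\times\SB_{p^k}(D)}$ is already $F$-rational. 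This forces any putative splitting of $(U_{k,D})_E$ that peels off a twist of $M(\SB_1(D_E))$ to descend to~$F$, contradicting indecomposability of $U_{k,D}$. Hence the conjecture holds for any variety which is of type~$0$ after every index-preserving extension. The cases $k=p$ (via an easy dimension/rank argument) and $k=4$, $p=2$ (via an inductive argument passing through the function field of $\SB_{2^{n-1}}(D)$) are then checked to be of type~$0$.

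So your high-level diagnosis is right, but the paper's actual leverage comes from the very specific rationality control on $\SB_1\times\SB_{p^k}$, which your proposal gestures at (``description of rational cycles on products'') without pinning down. That is the missing idea if you want to reproduce the partial results the paper obtains.
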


Conjecture \ref{conj} predicts a deep relation between the motivic decompositions of Severi-Brauer varieties and the Schur index of the underlying central simple algebra, and is already known to be true for classical Severi-Brauer varieties by \cite[Corollary 2.22]{upper}. In the second part, we give a positive answer to conjecture \ref{conj}, if the reduced dimension of the underlying ideals is either squarefree or the product of $4$ and a squarefree odd number. The proofs of these results rely on a study of the indecomposable direct summands lying in the motivic decompositions of those varieties, which provides a strategy to solve conjecture \ref{conj}, namely proposition \ref{type0}. Our main tool to prove these rigidity results as well as our approach of conjecture \ref{conj} is the theory of upper motives.

\section{Preliminaries}

We fix a base field $F$, and by a variety (over $F$) we mean a smooth, projective scheme over $F$. We denote by $\Alg$/F the category of commutative $F$-algebras and by $\Var$/F the category of varieties over $F$.

\textbf{Chow groups.} Let $\Lambda$ be a commutative ring and $X$ a variety over $F$. Our basic reference for the notion of Chow groups is \cite{EKM}. We denote by $\CH(X)$ (resp. $\Ch(X)$) the integral Chow group of $X$ (resp. the Chow group of $X$ with coefficients in $\Lambda$). If $E/F$ is a field extension, consider the flat morphism $X_E$$=$$X$$\times$$ \Spec(E)$$\rightarrow $$X$. We say that an element of $\Ch(X_E)$ is $F$-rational if it lies in the image of the induced pullback $\Ch(X)\rightarrow \Ch(X_E)$, and the image of a cycle $\alpha\in \Ch(X)$ is denoted by $\alpha_E$.

\textbf{Grothendieck Chow motives.} Following \cite{EKM}, we recall briefly the construction of the category $\CM(F;\Lambda)$ of Grothendieck Chow motives with coefficients in $\Lambda$.

\begin{notation}Let $X$ and $Y$ be varieties, and consider the decomposition $X=\bigsqcup_{i=1}^{n}X_i$ of $X$ into irreducible components. The group of correspondences of degree $k$ between $X$ and $Y$ with coefficients in $\Lambda$ is defined by $\Corr_k(X,Y;\Lambda)=\bigoplus_{i=1}^n\Ch_{\dim(X_i)+k}(X_i\times Y)$.
\end{notation}

Define the category $\mathrm{C}(F;\Lambda)$ as follows. The objects of $\mathrm{C}(F;\Lambda)$ are the pairs $X(i)$, where $X$ is a variety over $F$ and $i$ an integer. A morphism $\alpha:X(i)$$\rightsquigarrow $$Y(j)$ is an element of the group $\Corr_{i-j}(X,Y;\Lambda)$, the composition being defined by \cite[Proposition 63.2]{EKM}. The category $\mathrm{C}(F;\Lambda)$ is preadditive and its additive completion, denoted by $\CR(F;\Lambda)$, is the category of correspondences with coefficients in $\Lambda$.

The category $\CM(F;\Lambda)$ of Grothendieck Chow motives with coefficients in $\Lambda$ is the pseudoabelian envelope of the category $\CR(F;\Lambda)$. Its objects are pairs $(X,\pi)$, where $X$ is an object of $\CR(F;\Lambda)$ and $\pi\in \End(X)$ is a projector. Morphisms are given by
$$\Hom((X,\pi),(Y,\rho))=\rho\circ \Hom{\!}_{\CR(F;\Lambda)}(X,Y)\circ \pi.$$

The objects of the category $\CM(F;\Lambda)$ are called motives with coefficients in $\Lambda$, or simply motives if the ring of coefficients is clear in the context. The motive of a variety $X$ is the object $M(X)$$=$$(X,\Gamma_{id_X})(0)$ of $\CM(F;\Lambda)$, where $\Gamma_{id_X}$ denotes the class of the graph of the identity in $\Ch(X\times X)$. For any integer $i$, we denote by $\Lambda(i)$ the motive $M(\Spec(F))(i)$. The set  $\{\Lambda(i),~i\in \mathbb{Z}\}$ is the set of the \emph{Tate motives}.

A morphism of commutative rings $\varphi:\Lambda\rightarrow \Lambda '$ induces a \emph{change of coefficients functor}, which is the additive functor $\coeff_{\Lambda '/\Lambda}$$:$$\CM(F;\Lambda)\rightarrow \CM(F;\Lambda ')$ being the identity on objects and acting on morphisms by $id\otimes \varphi$.

\textbf{Geometrically split motives.} For any motive $N$, the $i$-th Chow group $\Ch_i(N)$ of $N$ is the $\Lambda$-module $\Hom_{\CM(F;\Lambda)}(\Lambda(i),N)$. The motive $N$ is \emph{geometrically split}, if there is a field extension $E/F$ such that the motive $N_E$ is isomorphic to a finite direct sum of Tate motives. The field $E$ is then called a splitting field of $X$, and in this setting if $N$$=$$M(X)$ is the motive of $X$, we use the notation $\bar{X}$$=$$X_E$, the image of a cycle $\alpha$ by the pull back $\Ch(X)\rightarrow \Ch(\bar{X})$ is denoted by $\bar{\alpha}$, and the set of the $F$-rational cycles in $\Ch(\bar{X})$ is denoted by $\bar{\Ch}(X)$. The reduced endomorphism ring of a Tate twisted direct summand $N$$=$$(X,\pi)(i)$ of the motive of $X$ is defined by $\overline{\End}(N)$$=$$\bar{\pi}\circ \bar{\Ch}_{\dim(X)}(X\times X)\circ \bar{\pi}$. 

\begin{df}Assume that $E/F$ is a splitting field of a motive $N$. The \emph{dimension} of $N$ is defined by $\dim(N)=\max\{i-j,~\Ch_i(N_E)~\mbox{and}~\Ch_j(N_E)\mbox{ are not trivial}\}$. The \emph{rank} of $N$ is the dimension of the $\Lambda$-module $\Ch(N_E)$.
\end{df}

Note that the dimension and the rank of a geometrically split motive $N$ do not depend on the choice of the splitting field of $N$.

\textbf{Algebraic groups of inner type.} Let $F_{sep}/F$ be a separable closure of $F$ and $G$ be a semisimple algebraic group over $F$. Consider a maximal torus $T$ of $G$, and $\Phi(G)$ the root system associated to the split maximal torus $T_{F_{sep}}$ of $G_{F_{sep}}$.

The $\ast$-action of the absolute Galois group $\Gamma=Gal(F_{sep}/F)$ is defined as follows. First, $\Gamma$ acts on $\Phi(G)$ (see \cite[$\S$ 20]{KMRT}) and for any system of simple roots $\Pi\subset \Phi(G)$, $\sigma\cdot \Pi$ is a system of simple roots. Since the Weyl group $\mathrm{W}(G)$ acts simply transitively on the set of the simple root systems of $\Phi(G)$, $w_{\sigma}(\sigma\cdot \Pi)=\Pi$ for a uniquely determined $w_{\sigma}$. The group $\Gamma$ thus acts on $\Pi$ by $\sigma\ast \alpha=w_{\sigma}(\sigma\cdot \alpha)$. The $\ast$-action is the induced action of $\Gamma$ on the Dynkin diagram $\Delta(G)$ of $G$, which does not depend on the choice of $\Pi$.

\begin{df}A semisimple algebraic group $G$ is of \emph{inner type} if the $\ast$-action of $\Gamma$ on $\Delta(G)$ is trivial. We say otherwise that the algebraic group $G$ is of \emph{outer type}.
\end{df}

\textbf{Projective linear groups.} Let $A$ be a central simple algebra over $F$ and let $\deg(A)=\sqrt{\dim(A)}$ be its degree. In the sequel $p$ will be a prime and a central simple algebra is $p$-primary if its degree is a power of $p$. The Schur index of $A$ is the degree of a division algebra Brauer equivalent to $A$. The dimension of any right ideal $I$ of $A$ is divisible by the degree of $A$ and the quotient is the reduced dimension of $I$. The group of automorphisms of a central simple $F$-algebra $A$ is a semisimple affine algebraic group of inner type, called the \emph{projective linear group of $A$}, and denoted by $\PGL{}_1(A)$.

For any $1\leq k\leq \deg(A)$, a typical example of projective homogeneous variety for $\PGL{}_1(A)$ is given by the \emph{Severi-Brauer variety} $\SB_k(A)$ of right ideals in $A$ of reduced dimension $k$ (we refer to \cite[\S 1]{KMRT} for the basic properties of the Severi-Brauer varieties). Since the variety $\SB_k(A)$ becomes isomorphic to the Grassmann variety $G(k,\deg(A))$ over a separable closure $F_{sep}/F$, the variety $\SB_k(A)$ is of dimension $k(\deg(A)-k)$. The variety $\SB_1(A)$ is called the \emph{classical Severi-Brauer variety} of $A$.

\section{Motives of projective homogeneous varieties}

Assume that $X$ is a projective homogeneous variety for a semisimple algebraic group. As shown by \cite[Theorem 2.1]{kock} the motive of the variety $X$ is geometrically split. The extensive study of the existence and unicity of motivic decompositions $X$ is given in \cite{chermer}.

\textbf{The Krull-Schmidt theorem.} Let $C$ be a pseudoabelian category and $\mathfrak{C}$ the set of the isomorphism classes of objects of $C$. The category $C$ satisfies the Krull-Schmidt theorem if the monoid $(\mathfrak{C},\oplus)$ is free. By \cite[Corollary 35]{chermer} (see also \cite[Corollary 2.6]{upper}) the Krull-Schmidt theorem holds for the motives of projective homogeneous varieties for semisimple algebraic groups if the ring of coefficients is finite.

\textbf{Upper motives.} In this section we assume that $\Lambda$ is a finite field and $X$ is a projective homogeneous variety $X$ for a semisimple algebraic group of inner type. The first projection $p$$:$$X$$\times $$X\rightarrow X$ induces the push forward $p_{\ast}$$:$$\Ch_{\dim(X)}(X$$\times $$ X)\rightarrow \Ch_{\dim(X)}(X)=\Lambda $$\cdot$$[X]$. The \emph{multiplicity} is the morphism $\mult$$:$$\End(M(X))$$\rightarrow $$\Lambda$ such that $\mult(\alpha)$ is the element of $\Lambda$ defined by $p_{\ast}(\alpha)$$=$$\mult(\alpha)$$\cdot $$[X]$. Since the multiplicity is a morphism of rings and $\Lambda$ is a finite field, the multiplicity of a projector of $\End(M(X))$ is either equal to $0$ or to $1$.

The theory of upper motives has its origins in the study of the motives of quadrics achieved by Vishik in \cite{vish}, and was generalized in \cite{upper} to arbitrary projective homogeneous varieties. A direct summand $(X,\pi)$ of the motive of a projective homogeneous variety is called \emph{upper} if $\mult(\pi)$$=$$1$. Assuming the Krull-Schmidt theorem holds for $X$, the \emph{upper motive} of $X$, denoted by $U(X)$, is the only direct summand of $X$ which is both upper and indecomposable. Of course, the upper motive of $X$ is only defined up to isomorphism. Similarly, a direct summand $(X,\pi)$ of $X$ is \emph{lower} if $\mult(^t\pi)$$=$$1$, and we may define the \emph{lower motive} of $X$ as the only indecomposable and lower direct summand of $X$ (the transpose $^t\pi$ being the push-forward of $\pi$ with respect of the exchange isomorphism $X$$\times $$X$$\rightarrow $$X$$\times$$ X$).

Given an algebraic group $G$ of inner type, we denote by $\mathfrak{X}^{\Lambda}_G$ the set of all the indecomposable motives of all the projective $G$-homogeneous varieties in $\CM(F;\Lambda)$. By \cite[Theorem 3.5]{upper}, any element of $\mathfrak{X}^{\Lambda}_G$ is isomorphic to a Tate twist of the upper motive of a projective $G$-homogeneous variety. The theory of upper motives thus reduces the study of motivic decompositions of projective $G$-homogeneous varieties to the study of the upper motives of $G$. The present note shows how this approach can be fruitful.

\section{Upper motives and the ring of coefficients}

Recall that a ring $A$ is connected if the only idempotent elements in $A$ are $0$ and $1$. In the sequel, we only consider projective homogeneous varieties for semisimple algebraic groups of inner type. For any morphism of rings $\Lambda \rightarrow \Lambda '$, the change of coefficients functor $\coeff_{\Lambda '/\Lambda }$ is additive and maps non-trivial projectors to non-trivial projectors. In particular if $X$ is a projective homogeneous variety, the indecomposable motives of the motivic decomposition of $X$ in $\CM(F;\Lambda ')$ are a priori 'smaller pieces' than the images of the indecomposable motives of $X$ in $\CM(F;\Lambda)$ under $\coeff_{\Lambda '/\Lambda}$. Showing that the change of coefficient functor $\coeff_{\Lambda '/\Lambda}$ lifts motivic decompositions, i.e. that the indecomposable motives of $X$ in $\CM(F;\Lambda ')$ are precisely the images of the indecomposable motives of $X$ in $\CM(F;\Lambda)$ under the functor $\coeff_{\Lambda '/\Lambda}$, allows to reduce the study of motivic decomposition to simpler rings of coefficients.

Several results are achieved in this direction and motivate the main result of this section, proposition \ref{liftcoeff}. Petrov, Semenov and Zainoulline show in \cite[\S 2]{Jinv} that the functor $\coeff_{\Lambda ' /\Lambda}$ lifts motivic decompositions if the associated morphism $\Lambda\rightarrow \Lambda '$ is surjective with nilpotent kernel. By Vishik and Yagita \cite[Corollary 2.6]{vishyag}, if $\Lambda$ is a finite and connected ring, the change of coefficients functor to the residue field of $\Lambda$ also lifts motivic decompositions. These results show that in many situations the study of the motivic decompositions of $X$ is reduced to the case where the ring of coefficients is a finite field. Proposition \ref{liftcoeff} asserts that for any field $K$ of characteristic $p$, the change of coefficients functor $\coeff_{K/\mathbb{F}_p}$ lifts motivic decompositions. All these results imply that study of the motivic decompositions of projective homogeneous varieties is reduced to the study with coefficients in $\mathbb{F}_p$.

\begin{lem}\label{lemred}Assume that $N$ is a direct summand of a projective homogeneous variety $X$. The motive $N$ is indecomposable if and only if its reduced endomorphism ring is connected.
\end{lem}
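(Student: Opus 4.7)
The plan is to use the pseudoabelian structure of $\CM(F;\Lambda)$ together with Rost nilpotence for projective homogeneous varieties to transfer the idempotent-lifting question from $\End(N)$ to $\overline{\End}(N)$.

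First, I would observe that in any pseudoabelian category an object $N$ is indecomposable if and only if its endomorphism ring $\End(N)$ is connected (an idempotent $e\in\End(N)$ not equal to $0$ or $1$ splits $N$ as $\mathrm{Im}(e)\oplus\mathrm{Im}(1-e)$, both nonzero; conversely a direct sum decomposition gives a non-trivial idempotent). So the statement reduces to showing that $\End(N)$ is connected if and only if $\overline{\End}(N)$ is connected.

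Next, I would exhibit a natural surjective ring homomorphism
\[
\rho:\End(N)\longrightarrow \overline{\End}(N),\qquad \pi\circ\alpha\circ\pi\longmapsto \bar{\pi}\circ\bar{\alpha}\circ\bar{\pi},
\]
induced by the pullback along a splitting field $E/F$, the surjectivity being immediate from the very definition $\overline{\End}(N)=\bar{\pi}\circ\bar{\Ch}_{\dim(X)}(X\times X)\circ\bar{\pi}$. The key input is then Rost nilpotence for projective homogeneous varieties (Chernousov--Gille--Merkurjev): the kernel of the scalar extension map $\End(M(X))\to \End(M(X_E))$ consists of nilpotent correspondences. Restricting this to the subring $\End(N)\subset \End(M(X))$, the kernel of $\rho$ is a nil ideal.

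Finally I would invoke the standard fact that idempotents lift along a surjection of rings with nil kernel, and, conversely, that the image of an idempotent is an idempotent; hence $\End(N)$ contains a non-trivial idempotent if and only if $\overline{\End}(N)$ does. Combined with the first step, this yields the equivalence in the lemma. The only subtle point is the appeal to Rost nilpotence in the generality of arbitrary projective homogeneous varieties for inner type semisimple groups; everything else is formal from the pseudoabelian setup and elementary ring theory.
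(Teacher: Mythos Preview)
Your proposal is correct and follows essentially the same route as the paper. The paper's proof compresses your steps 2--4 into a single citation of \cite[Corollary 92.5]{EKM}, which is precisely the statement that a non-trivial idempotent in $\overline{\End}(N)$ lifts to a non-trivial idempotent in $\End(N)$; the underlying mechanism there is exactly Rost nilpotence plus idempotent lifting along a nil kernel, which you have made explicit.
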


\begin{proof}For any non-trivial projector $\alpha$ in $\overline{\End}(N)$, we may choose a non-trivial projector $\beta\in \End(N)$ such that $\overline{\beta}=\alpha$ by \cite[Corollary 92.5]{EKM}. In particular the endomorphism ring of $N$ is connected if and only if the reduced endomorphism ring of $N$ is connected.
\end{proof}

For the sake of completeness, we recast the proofs of \cite{dec2}. The following particular case of \cite[Corollary 1.3]{dec2} will be used to show that the image of an indecomposable direct summand of a projective homogeneous variety under the change of coefficients $\coeff_{K/\mathbb{F}_p}$ with respect to a field $K$ of characteristic $p$ is indecomposable.

\begin{lem}\label{finconn}Assume that $A$ is a finite and connected $\mathbb{F}_p$-algebra, endowed with a ring homomorphism $\varphi:A\rightarrow \mathbb{F}_p$. For any field $K$ of characteristic $p$, the tensor product $A\otimes_{\mathbb{F}_p}K$ is connected.
\end{lem}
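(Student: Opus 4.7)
The plan is to exploit the structure theory of finite rings, using Wedderburn--Artin together with Wedderburn's little theorem, and then conclude by lifting idempotents modulo a nilpotent ideal.

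First I would analyse $A$ itself. Since $A$ is finite it is Artinian, so its Jacobson radical $J=J(A)$ is a nilpotent two-sided ideal and $A/J$ is a finite semisimple ring. By Wedderburn--Artin, $A/J$ decomposes as a finite product of matrix algebras over division rings, and each such division ring, being finite, is a finite field by Wedderburn's little theorem. Thus $A/J\cong \prod_{i=1}^{r} M_{n_{i}}(\mathbb{F}_{p^{m_{i}}})$.

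Next I would show that $A/J\cong \mathbb{F}_{p}$. Since idempotents lift modulo the nil ideal $J$, the hypothesis that $A$ is connected forces $A/J$ to be connected. A product of matrix algebras over finite fields is connected only if it consists of a single factor (otherwise one obtains non-trivial central idempotents) and if that factor has size one (otherwise $e_{11}$ is a non-trivial idempotent); hence $A/J$ is a finite field $\mathbb{F}_{p^{m}}$. This is precisely where the homomorphism $\varphi$ enters: because $J$ is nilpotent, $\varphi(J)\subset \mathbb{F}_{p}$ is nilpotent and therefore zero, so $\varphi$ factors through $A/J\cong \mathbb{F}_{p^{m}}$. A ring homomorphism between fields is injective, which forces $m=1$ and $A/J\cong \mathbb{F}_{p}$.

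Finally I would tensor with $K$. The ideal $J\otimes_{\mathbb{F}_{p}}K$ of $A\otimes_{\mathbb{F}_{p}}K$ remains nilpotent (any $n$ with $J^{n}=0$ yields $(J\otimes K)^{n}=0$), and by flatness the quotient is $(A/J)\otimes_{\mathbb{F}_{p}}K\cong K$, which is connected. Lifting idempotents modulo the nil ideal $J\otimes K$ then shows that $A\otimes_{\mathbb{F}_{p}}K$ itself has no non-trivial idempotents, i.e.\ is connected. The only real point of substance is the second step, where the assumption on $\varphi$ must be used to rule out the possibility $A/J\cong \mathbb{F}_{p^{m}}$ with $m>1$: this is essential, since otherwise $(A/J)\otimes_{\mathbb{F}_{p}}K$ could acquire non-trivial idempotents once $K$ contains a copy of $\mathbb{F}_{p^{m}}$.
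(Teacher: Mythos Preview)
Your argument is correct. The key observation that $A/J(A)\cong\mathbb{F}_p$ (using connectedness to force a single $1\times 1$ factor, and then $\varphi$ to force $m=1$) is exactly the right way to use the hypotheses, and the passage to $A\otimes_{\mathbb{F}_p}K$ via nilpotence of $J\otimes K$ and idempotent lifting is standard and sound.

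Note that the paper does not actually supply a proof of this lemma: it quotes it as a particular case of \cite[Corollary~1.3]{dec2} and moves on. The only hint about the intended argument is the remark (attributed to T.~Y.~Lam) that the same proof goes through when ``finite'' is weakened to ``Artinian''. Your proof uses nothing about $A$ beyond the fact that it is Artinian (the Jacobson radical is nilpotent, the semisimple quotient is a product of matrix rings over division rings, idempotents lift modulo nil ideals), so it is entirely consistent with---and almost certainly the same as---the argument the author has in mind. The one place finiteness is genuinely invoked is in identifying the division rings as finite fields via Wedderburn's little theorem; in the Artinian generalisation this step would have to be replaced, but that is outside the scope of the stated lemma.
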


As pointed out by T.Y. Lam, lemma \ref{finconn} holds (with a quite similar proof) if the finiteness of the ring $A$ is replaced by the fact that $A$ is Artinian. We however stick to the case where $A$ is finite since we are mainly interested in our applications to motives.

\begin{prop}\label{coeff1}Let $X$ be a projective homogeneous variety and let $K$ be a field of characteristic $p$. The image of the upper motive of $X$ in $\CM(F;\mathbb{F}_p)$ under the change of coefficients functor $\coeff_{K/\mathbb{F}_p}$ is the upper motive of $X$ in $\CM(F;K)$.
\end{prop}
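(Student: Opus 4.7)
The strategy is to write $U(X)=(X,\pi)$ as an upper indecomposable summand in $\CM(F;\mathbb{F}_p)$ and verify that its image $\coeff_{K/\mathbb{F}_p}(U(X))=(X,\pi\otimes 1)$ is still upper and indecomposable in $\CM(F;K)$; given the (Krull--Schmidt) uniqueness of the upper indecomposable summand, this identifies it with $U(X)$ over $K$.

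The ``upper'' part is essentially formal. By construction, $\coeff_{K/\mathbb{F}_p}$ acts on morphisms as $\mathrm{id}\otimes\varphi$ with $\varphi:\mathbb{F}_p\hookrightarrow K$, and the multiplicity is defined by a push-forward that is compatible with change of coefficients. Hence $\mult(\pi\otimes 1)=\varphi(\mult(\pi))=1$, so $(X,\pi\otimes 1)$ is upper.

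The heart of the proof is indecomposability, for which I would pass to endomorphism rings and apply Lemma~\ref{finconn}. The first step is to identify $\End\bigl((X,\pi\otimes 1)\bigr)=(\pi\otimes 1)\,\Corr_0(X,X;K)\,(\pi\otimes 1)$ with $\End(U(X))\otimes_{\mathbb{F}_p}K$. This rests on the fact that Chow groups of a variety satisfy $\Ch(-;K)=\Ch(-;\mathbb{F}_p)\otimes_{\mathbb{F}_p}K$, so correspondence groups commute with the flat base change $\mathbb{F}_p\to K$, and the bracketing $(\pi\otimes 1)(-)(\pi\otimes 1)$ matches $\pi(-)\pi\otimes K$. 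Next, $\End(U(X))$ is finite (for $X$ projective homogeneous, $\Ch(X\times X;\mathbb{F}_p)$ is a finite $\mathbb{F}_p$-module) and connected (by indecomposability of $U(X)$, or equivalently by Lemma~\ref{lemred}). Finally, $\End(U(X))$ comes equipped with a ring homomorphism to $\mathbb{F}_p$: the multiplicity is a ring homomorphism on $\End(M(X))$, and since $\mult(\pi)=1$, its restriction to $\pi\,\End(M(X))\,\pi=\End(U(X))$ is a unital ring homomorphism $\End(U(X))\to\mathbb{F}_p$. With these three ingredients in hand, Lemma~\ref{finconn} applied to $A=\End(U(X))$ yields that $\End(U(X))\otimes_{\mathbb{F}_p}K$ is connected. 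By (the argument of) Lemma~\ref{lemred}, connectedness of the endomorphism ring of a summand is equivalent to its indecomposability, so $(X,\pi\otimes 1)$ is indecomposable.

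The main obstacle I anticipate is the identification $\End\bigl((X,\pi\otimes 1)\bigr)\cong \End(U(X))\otimes_{\mathbb{F}_p}K$, which needs the compatibility of Chow-group correspondence rings with change of coefficient ring; the rest of the argument is essentially an assembly of the upperness-preservation, a finiteness observation, and the direct invocation of Lemma~\ref{finconn} with $\varphi=\mult|_{\End(U(X))}$.
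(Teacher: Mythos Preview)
Your argument is correct and follows the same strategy as the paper: verify that $\coeff_{K/\mathbb{F}_p}(U(X))$ is upper and then use Lemma~\ref{finconn} together with the multiplicity homomorphism to show its endomorphism ring is connected, hence the summand is indecomposable.

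There is one technical difference worth flagging. You apply Lemma~\ref{finconn} to the \emph{full} endomorphism ring $A=\End(U(X))=\pi\,\Ch_{\dim X}(X\times X;\mathbb{F}_p)\,\pi$, and for this you need $A$ finite. Your parenthetical justification (``for $X$ projective homogeneous, $\Ch(X\times X;\mathbb{F}_p)$ is a finite $\mathbb{F}_p$-module'') is true but is not as immediate as you make it sound; it is not established in the paper and would require an extra reference or argument. The paper sidesteps this by working instead with the \emph{reduced} endomorphism ring $\overline{\End}(U(X))=\bar\pi\,\bar{\Ch}_{\dim X}(X\times X)\,\bar\pi$, which is visibly finite as a subring of $\Ch_{\dim X}(\bar X\times\bar X;\mathbb{F}_p)$ (the split variety being cellular). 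One then checks that multiplicity descends to a ring map $\overline{\mult}:\overline{\End}(U(X))\to\mathbb{F}_p$, applies Lemma~\ref{finconn} there, and invokes Lemma~\ref{lemred} to pass from connectedness of $\overline{\End}$ back to indecomposability. Incidentally, the identification $\End_K\cong\End_{\mathbb{F}_p}\otimes K$ that worried you is the easy part (just $\CH\otimes K=(\CH\otimes\mathbb{F}_p)\otimes_{\mathbb{F}_p}K$); the finiteness input is where the reduced ring genuinely buys you something.
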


\begin{proof}Setting $U(X)$ for the upper motive of $X$ in $\CM(F;\mathbb{F}_p)$, it is sufficient to show that $\coeff_{K/\mathbb{F}_p}(U(X))$ is indecomposable. The multiplicity $\mult$$:$$\End(U(X))$$\longrightarrow $$\mathbb{F}_p$ induces a morphism of rings $\overline{\mult}$$:$$\overline{\End}(U(X))$$\longrightarrow $$\mathbb{F}_p$. Since $\overline{\End}(U(X))$ is finite and connected, we may apply lemma \ref{finconn} and $\overline{\End}(\coeff_{K/\mathbb{F}_p}(U(X)))$ is connected. The motive $\coeff_{K/\mathbb{F}_p}(U(X))$ is therefore indecomposable by lemma \ref{lemred}.
\end{proof}

\begin{cor}Assume that $X$ is a projective homogeneous variety. The motive of $X$ is indecomposable in $\CM(F;\mathbb{F}_p)$ if and only if the motive of $X$ is indecomposable in $\CM(F;K)$, for any field $K$ of characteristic $p$.
\end{cor}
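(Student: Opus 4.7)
The plan is to deduce the corollary directly from Proposition \ref{coeff1} in the forward direction, and from the general fact (recalled at the start of Section 3) that $\coeff_{K/\mathbb{F}_p}$ sends non-trivial projectors to non-trivial projectors in the backward direction. Neither direction should require new ideas beyond what is already available.

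For the forward direction, suppose $M(X)$ is indecomposable in $\CM(F;\mathbb{F}_p)$. Since the diagonal projector $\Gamma_{id_X}$ has multiplicity $1$, the motive $M(X)$ is upper in the sense of Section 2, and being indecomposable it coincides with $U(X)$. Proposition \ref{coeff1} then asserts that $\coeff_{K/\mathbb{F}_p}(U(X))$ is the upper motive of $X$ in $\CM(F;K)$, hence indecomposable. Since the change of coefficients functor is the identity on objects, $\coeff_{K/\mathbb{F}_p}(M(X))=M(X)$ in $\CM(F;K)$, and the conclusion follows.

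For the backward direction, I argue by contraposition. If $M(X)$ decomposes non-trivially in $\CM(F;\mathbb{F}_p)$, then there is a non-trivial projector $\pi\in \End(M(X))$ (i.e.\ $\pi\neq 0$ and $\pi\neq \Gamma_{id_X}$), yielding a decomposition $M(X)=(X,\pi)\oplus (X,\Gamma_{id_X}-\pi)$. Applying the additive functor $\coeff_{K/\mathbb{F}_p}$, both $\coeff_{K/\mathbb{F}_p}(\pi)$ and $\coeff_{K/\mathbb{F}_p}(\Gamma_{id_X}-\pi)$ remain non-trivial projectors, as recalled at the beginning of Section 3. Hence $M(X)$ admits a non-trivial decomposition in $\CM(F;K)$ as well, contradicting the hypothesis.

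The only mild subtlety is to observe that the two complementary projectors $\pi$ and $\Gamma_{id_X}-\pi$ are simultaneously non-trivial in $\CM(F;\mathbb{F}_p)$, a statement that is immediate from $\pi\notin\{0,\Gamma_{id_X}\}$ together with the fact that $\coeff_{K/\mathbb{F}_p}$ is additive and preserves the identity; so no real obstacle arises and the corollary falls out as a formal consequence of Proposition \ref{coeff1}.
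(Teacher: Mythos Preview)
Your proof is correct and follows essentially the same approach as the paper: the forward direction identifies $M(X)$ with its upper motive and applies Proposition~\ref{coeff1}, while the backward direction uses additivity of $\coeff_{K/\mathbb{F}_p}$ (you spell out the contrapositive in more detail, but the content is the same).
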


\begin{proof}Recall that the motive of $X$ is an indecomposable object of $\CM(F;\mathbb{F}_p)$ if and only if the upper motive of $X$ in $\CM(F;\mathbb{F}_p)$ is the whole motive of $X$. Assuming that the motive of $X$ is indecomposable in $\CM(F;\mathbb{F}_p)$, proposition \ref{coeff1} asserts that for any field $K$ of characteristic $p$, the motive $\coeff_{K/\mathbb{F}_p}(M(X))$ (which is the motive of the variety $X$ in $\CM(F;K)$) is indecomposable. The converse is clear since the functor $\coeff_{K/\mathbb{F}_p}$ is additive.
\end{proof}

\begin{prop}\label{liftcoeff}For any finite field $K$ of characteristic $p$, the functor $\coeff_{K/\mathbb{F}_p}$ lifts the motivic decompositions of projective homogeneous varieties.
\end{prop}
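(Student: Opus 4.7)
The plan is to combine Proposition~\ref{coeff1} (which handles the upper motive) with the structural result \cite[Theorem 3.5]{upper} (which states that every indecomposable direct summand of a projective $G$-homogeneous variety is a Tate twist of the upper motive of some projective $G$-homogeneous variety) and the Krull--Schmidt theorem, which holds in both $\CM(F;\mathbb{F}_p)$ and $\CM(F;K)$ since both rings of coefficients are finite.

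More precisely, fix a projective $G$-homogeneous variety $X$, with $G$ a semisimple algebraic group of inner type. Working in $\CM(F;\mathbb{F}_p)$, Krull--Schmidt provides a decomposition $M(X)\simeq \bigoplus_{i=1}^n N_i$ into indecomposable summands. By \cite[Theorem 3.5]{upper}, each $N_i$ is isomorphic to a Tate twist $U(Y_i)(k_i)$ of the upper motive of some projective $G$-homogeneous variety $Y_i$.

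Applying the additive functor $\coeff_{K/\mathbb{F}_p}$ yields
$$M(X)_K \simeq \bigoplus_{i=1}^n \coeff_{K/\mathbb{F}_p}(U(Y_i))(k_i).$$
By Proposition~\ref{coeff1}, each $\coeff_{K/\mathbb{F}_p}(U(Y_i))$ is (isomorphic to) the upper motive of $Y_i$ in $\CM(F;K)$, and in particular is indecomposable. So the right-hand side is a decomposition of $M(X)_K$ into indecomposables in $\CM(F;K)$; by Krull--Schmidt it is \emph{the} indecomposable decomposition, up to reordering and isomorphism. Thus the indecomposable summands of $X$ in $\CM(F;K)$ are precisely the images under $\coeff_{K/\mathbb{F}_p}$ of the indecomposable summands of $X$ in $\CM(F;\mathbb{F}_p)$, which is the content of lifting.

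There is really no obstacle: all the substantive work has been done upstream, in Proposition~\ref{coeff1} (itself powered by Lemma~\ref{finconn} and Lemma~\ref{lemred}) and in the reduction of arbitrary indecomposable summands to Tate twists of upper motives. The role of Proposition~\ref{liftcoeff} is simply to package these ingredients, together with Krull--Schmidt, into the desired lifting statement.
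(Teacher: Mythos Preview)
Your proof is correct and follows essentially the same approach as the paper: use Krull--Schmidt to reduce to showing that each indecomposable summand stays indecomposable after $\coeff_{K/\mathbb{F}_p}$, invoke \cite[Theorem 3.5]{upper} to identify such summands with Tate twists of upper motives, and then apply Proposition~\ref{coeff1}. The paper's argument is just a terser version of what you wrote.
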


\begin{proof}By the Krull-Schmidt theorem it is sufficient to show that for any indecomposable motive $N$ of a projective $G$-homogeneous variety in $\CM(F;\mathbb{F}_p)$, $\coeff_{K/\mathbb{F}_p}(N)$ is indecomposable in $\CM(F;K)$. By \cite[Theorem 3.5]{upper} the motive $N$ is isomorphic to a Tate twist of the upper motive $U(Y)$ of another projective $G$-homogeneous variety. It remains to apply proposition \ref{coeff1}. 
\end{proof}

Note that the same question can be addressed for projective homogeneous varieties of outer types (i.e. when the underlying algebraic group is not of inner type). This problem is answered in \cite{dec2}.

\section{Upper motives of projective linear groups and the Schur index}

This section is dedicated to the proof of conjecture \ref{conj} for the varieties $\SB_k(D)$, where $k$ is either a squarefree number or of the form $4k'$, for a squarefree odd number $k'$. Our strategy relies on a complete description of the indecomposable summands arising in the motivic decomposition of those Severi-Brauer varieties. This qualitative study leads to a new proof of the indecomposability \cite[Theorem 4.2]{upper} of the motive of $\SB_2(D)$ in $\CM(F,\mathbb{F}_2)$, if $D$ is a $2$-primary division algebra. Let $D$ be a division algebra of degree $p^n$ over a field $F$. In the sequel we denote by $U_{k,D}$ the upper motive of the variety $\SB_{p^k}(D)$ in $\CM(F;\mathbb{F}_p)$. Note that the motives $U_{k,D}$ and $U_{k',D}$ are isomorphic if and only if $k=k'$.

\begin{df}Let $D$ be a $p$-primary division algebra over $F$ and $k$ be an integer. We say that a Severi-Brauer variety $X$ of $D$ is \emph{of type $k$} if any indecomposable direct summand of the motive of $X$ in $\CM(F;\mathbb{F}_p)$ is either isomorphic to the upper motive of $X$ or to a Tate twist of $U_{l,D}$, for some $l\leq k$.
\end{df}

The following two lemmas are direct consequences of the theory of upper motives.

\begin{lem}\label{sbktypekm1}Let $D$ be a division algebra over $F$ of degree $p^n$. For any $0\leq k\leq  n$, the Severi-Brauer variety $\SB_{p^k}(D)$ is of type $k$$-$$1$.
\end{lem}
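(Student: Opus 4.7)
The plan is to apply \cite[Theorem~3.5]{upper} to identify the indecomposable summands of $M(\SB_{p^k}(D))$ as Tate twists of upper motives of projective $\PGL_1(D)$-homogeneous varieties, then restrict the possible values of $l$ via a generic splitting argument.

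First, by \cite[Theorem~3.5]{upper}, every indecomposable summand $N$ of $M(\SB_{p^k}(D))$ in $\CM(F;\mathbb{F}_p)$ is isomorphic to a Tate twist $U(Y)(t)$ of the upper motive of some projective $\PGL_1(D)$-homogeneous variety $Y$. Each such $Y$ is a generalized Severi-Brauer variety $\SB_J(D)$, and by Karpenko's classification (for $D$ a $p$-primary division algebra), $U(\SB_J(D))\cong U_{l,D}$ for a uniquely determined $0\leq l\leq n$. Hence every summand has the form $U_{l,D}(t)$.

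Second, I would constrain $l\leq k$ by passing to $E=F(\SB_{p^l}(D))$. Over $E$, the index of $D_E$ equals $p^l$ and $U_{l,D}$ splits as a sum of Tate motives. For $U_{l,D}(t)$ to persist as an indecomposable summand of $M(\SB_{p^k}(D))_E$, its Tate constituents must appear in the motivic decomposition of $\SB_{p^k}(D)_E$, which forces the existence of an $E$-rational point on $\SB_{p^k}(D)$. This is equivalent to $\ind(D_E)=p^l$ dividing $p^k$, i.e.\ $l\leq k$. The case $l=k$ is realized by the upper motive $U(\SB_{p^k}(D))=U_{k,D}$ itself, occurring with multiplicity one (Krull--Schmidt) and with trivial Tate twist.

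The main obstacle I anticipate is ruling out nontrivial Tate twists of $U_{k,D}$ as further summands, which the type $k-1$ condition forbids. The most dangerous candidate is the lower motive $L(\SB_{p^k}(D))\cong U_{k,D}^{\vee}(\dim\SB_{p^k}(D))$, which \emph{a priori} could be such a twist. Resolving this should rely on the identifications $\SB_m(D^{\mathrm{op}})\cong\SB_{\deg D-m}(D)$ and $\PGL_1(D)=\PGL_1(D^{\mathrm{op}})$, combined with the classification above, to show that $U_{k,D}^{\vee}$ is in fact a Tate twist of some $U_{l,D}$ with $l\leq k-1$ (or else coincides with the upper motive itself in the boundary cases where $M(\SB_{p^k}(D))$ is indecomposable).
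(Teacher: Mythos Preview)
Your first two steps are essentially what the paper does (it simply cites \cite[Theorem~3.8]{upper} for the fact that every indecomposable summand is a Tate twist of $U_{l,D}$ with $l\leq k$). One minor inaccuracy: over $E=F(\SB_{p^l}(D))$ the motive $(U_{l,D})_E$ does \emph{not} in general split as a sum of Tate motives, since $\ind(D_E)=p^l$ and not $1$; what is true (and sufficient for your argument) is that $(U_{l,D})_E$, being upper, contains the Tate motive $\mathbb{F}_p$ as a summand, which then forces a $0$-cycle of degree prime to $p$ on $\SB_{p^k}(D_E)$ and hence $l\leq k$.

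The genuine gap is in your treatment of nontrivial Tate twists of $U_{k,D}$. Your proposed duality argument does not work: under the identification $\SB_m(D^{\mathrm{op}})\cong\SB_{p^n-m}(D)$ one has $v_p(p^n-p^k)=k$ (for $0<k<n$), so the upper motive on the opposite side is again $U_{k,D}$, and consequently $U_{k,D}^{\vee}$ is a Tate twist of $U_{k,D}$ itself, \emph{not} of some $U_{l,D}$ with $l\leq k-1$. Thus the lower motive of $\SB_{p^k}(D)$ is (a twist of) $U_{k,D}$, and your hoped-for conclusion fails precisely in the generic case. In addition, even if the lower summand were handled, your outline says nothing about possible ``middle'' copies $U_{k,D}(i)$ with $0<i<\dim\SB_{p^k}(D)$.

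The paper resolves this with a single stroke you are missing: by the incompressibility result \cite[Theorem~4.1]{upper} one has $\dim U_{k,D}=\dim\SB_{p^k}(D)$. Hence if $U_{k,D}(i)$ were a summand of $M(\SB_{p^k}(D))$ with $i\neq 0$, its nonzero Chow groups would lie outside the range $[0,\dim\SB_{p^k}(D)]$, which is impossible. This forces $i=0$, and then the fact that $\Ch^0(\SB_{p^k}(D))$ has rank one leaves room for exactly one copy of $U_{k,D}$, namely the upper motive. You should replace the duality paragraph with this dimension argument.
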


\begin{proof}By \cite[Theorem 3.8]{upper}, any indecomposable direct summand of $\SB_{p^k}(D)$ in $\CM(F;\mathbb{F}_p)$ is isomorphic to a Tate twist of $U_{l,D}$, for some $l\leq k$. Furthermore the dimension of the motive $U_{k,D}$ is maximal by \cite[Theorem 4.1]{upper}, and in particular the motive of $\SB_{p^k}(D)$ does not contain an indecomposable direct summand isomorphic to $U_{k,D}(i)$, with $i\neq 0$. Since the Chow group $\Ch^0(\SB_{p^k}(D))$ is of rank one, there is at most one indecomposable summand of the motive of $\SB_{p^k}(D)$ which is isomorphic to $U_{k,D}$, namely the upper motive of $\SB_{p^k}(D)$.
\end{proof}

\begin{lem}\label{sbc}Let $D$ be a division algebra of degree $p^n$ (with $n\geq 1$) and $C$ be a division algebra Brauer equivalent to $D_{F(\SB_{2^{n-1}}(D))}$. For any $0$$\leq$$k$$\leq$$n$$-$$1$, the motive $(U_{k,D})_{F(\SB_{2^{n-1}}(D))}$ contains a direct summand isomorphic to the motive $U_{k,C}\oplus U_{k,C}(p^{n+k-1}(p-1))$
\end{lem}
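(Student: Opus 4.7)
The plan is to exhibit $U_{k,C}$ and $U_{k,C}(p^{n+k-1}(p-1))$ as, respectively, the upper indecomposable summand of $(U_{k,D})_E$ and its Poincar\'e dual partner. (I read $\SB_{2^{n-1}}$ in the statement as the typographical variant $\SB_{p^{n-1}}$, consistent with the shift $p^{n+k-1}(p-1)$ appearing in the conclusion.) Set $E = F(\SB_{p^{n-1}}(D))$.

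First I would verify that $\deg C = p^{n-1}$: the variety $\SB_{p^{n-1}}(D)$ acquires an $E$-rational point by construction, so $\ind(D_E) \leq p^{n-1}$, and the matching lower bound from the index-reduction formula for generalized Severi-Brauer varieties of a $p$-primary division algebra gives equality; in particular $U_{k,C}$ is well defined for $0 \leq k \leq n-1$. Next I would identify the upper motive of the base change: since $D_E$ and $C$ are Brauer equivalent, $\SB_{p^k}(D_E)$ and $\SB_{p^k}(C)$ have identical splitting-field patterns over $E$ (a field $L/E$ splits either one iff $\ind(D_{EL}) = \ind(C_L) \mid p^k$), and the splitting-field-pattern principle for upper motives from \cite[\S 3]{upper} then yields $U(\SB_{p^k}(D_E)) \cong U_{k,C}$. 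Because $U_{k,D}$ is the unique indecomposable direct summand of $M(\SB_{p^k}(D))$ of multiplicity one, and multiplicity is preserved by base change, $(U_{k,D})_E$ is the unique direct summand of $M(\SB_{p^k}(D_E))$ of multiplicity one; hence it must contain the upper summand $U_{k,C}$ of $M(\SB_{p^k}(D_E))$ as a direct summand.

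For the second copy I would invoke Poincar\'e duality. By \cite[Theorem 4.1]{upper} applied to both $D$ and $C$, the motives $U_{k,D}$ and $U_{k,C}$ have maximal dimensions $p^k(p^n-p^k)$ and $p^k(p^{n-1}-p^k)$ respectively, so both are Poincar\'e self-dual ($U_{k,D}^{\vee}\cong U_{k,D}(-\dim U_{k,D})$ and likewise for $U_{k,C}$), a property which survives base change. Matching dual partners of indecomposable summands inside the self-dual motive $(U_{k,D})_E$ via Krull--Schmidt, the dual partner of the summand $U_{k,C}$ identifies with
\[
U_{k,C}^{\vee}(\dim U_{k,D}) \;\cong\; U_{k,C}(\dim U_{k,D} - \dim U_{k,C}) \;=\; U_{k,C}\bigl(p^{n+k-1}(p-1)\bigr),
\]
and is therefore another direct summand of $(U_{k,D})_E$. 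Since $p^{n+k-1}(p-1)>0$ for $n\geq 1$ and $p\geq 2$, this second copy has a distinct Tate shift and is non-isomorphic to the first; Krull--Schmidt then yields $U_{k,C}\oplus U_{k,C}\bigl(p^{n+k-1}(p-1)\bigr)$ as a direct summand of $(U_{k,D})_E$, as claimed.

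The principal obstacle is the identification $U(\SB_{p^k}(D_E)) \cong U_{k,C}$, which rests on the splitting-field-pattern principle for upper motives of projective $\PGL_1$-homogeneous varieties; once this is granted, the remaining steps are formal manipulations with multiplicities, Poincar\'e self-duality, and the Krull--Schmidt theorem.
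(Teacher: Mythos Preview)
Your argument is correct. Both your proof and the paper's rest on the same two pillars: $(U_{k,D})_E$ is upper in $M(\SB_{p^k}(D_E))$ and hence contains the upper indecomposable summand, and a duality argument produces the twisted copy at shift $p^{n+k-1}(p-1)$. The difference lies in how each pillar is supported. The paper invokes the explicit decomposition of $M(\SB_{p^k}(D))_{F(X)}$ from \cite[Theorem~10.13]{flag} into twists of products $M(\SB_{i_1}(C)\times\cdots\times\SB_{i_p}(C))$; it reads off that the top piece is $M(\SB_{p^k}(C))$ and the bottom piece is $M(\SB_{p^k}(C))(p^{n+k-1}(p-1))$, then uses the upper/lower language (the upper motive of $\SB_{p^k}(C)$ is also lower, by \cite[Theorem~4.1]{upper}) to conclude. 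You bypass the explicit decomposition: you identify $U(\SB_{p^k}(D_E))\cong U_{k,C}$ directly via the splitting-field criterion for isomorphism of upper motives, and you obtain the second copy by Poincar\'e self-duality of $U_{k,D}$ and $U_{k,C}$, computing the shift as $\dim U_{k,D}-\dim U_{k,C}$. Your route is self-contained and avoids the cellular-decomposition machinery; the paper's route has the incidental benefit that the decomposition $(\ast)$ from \cite{flag} is reused verbatim in the proof of Proposition~\ref{types-2}, so introducing it here is economical for the paper as a whole. One small wording issue: ``$(U_{k,D})_E$ is the unique direct summand of $M(\SB_{p^k}(D_E))$ of multiplicity one'' is not literally true (any summand containing the upper indecomposable is also upper); what you mean, and what suffices, is that $(U_{k,D})_E$ is an upper summand, hence contains the upper indecomposable.
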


\begin{proof}Denote by $N_{(i_1,...,i_p)}$ the motive of the variety $\SB_{i_1}(C)$$\times$$ ...$$\times$$ \SB_{i_p}(C)$, and by $F(X)$ the function field $F(\SB_{2^{n-1}}(D))$. By \cite[Theorem 10.13]{flag} the motive of $(\SB_{p^k}(D))_{F(X)}$ has a decomposition into a direct sum of twists of the motives $N_{(i_1,...,i_p)}$, where $(i_1,...,i_p)$ runs over all the non-negative integers such that $i_1$$+$$...$$+$$i_p$$=$$p^k$. The upper summand of this decomposition is $N_{(p^k,0,...,0)}$$=$$M(\SB_{p^k}(C))$, and the lower summand is $N_{(0,...,0,p^k)}(p^{n+k-1}(p-1))$, which is the motive $M(\SB_{p^k}(C))(p^{n+k-1}(p-1))$.

Recall that the dimension of the motive $(U_{k,D})_{F(X)}$ is the dimension of the variety $\SB_{p^k}(D)$ by \cite[Theorem 4.1]{upper}. The motive $(U_{k,D})_{F(X)}$ is both an upper and a lower direct summand of the motive of $(\SB_{p^k}(D))_{F(X)}$, and in particular by the Krull-Schmidt theorem the upper motive and the lower motive of $(\SB_{p^k}(D))_{F(X)}$ are both direct summands of $(U_{k,D})_{F(X)}$. Since the upper motive of $\SB_{p^k}(C)$ is also lower (again, by \cite[Theorem 4.1]{upper}) the upper motive of $(\SB_{p^k}(D))_{F(X)}$ is precisely $U_{k,C}$, and its lower motive is $U_{k,C}(p^{n+k-1}(p-1))$.
\end{proof}

\begin{notation}Considering three integers $0\leq k \leq n$ and $i$, we denote by $\mu^i_{k,n}$ the number of partitions $\lambda=(\lambda_1,...,\lambda_{p^n-p^k})$ such that $p^k\geq \lambda_1\geq ...\geq \lambda_{p^n-p^k}\geq 0$ and $|\lambda|=\sum_{j=1}^{p^n-p^k}\lambda_j$ is equal to $p^n+p^k(p^n-p^k)-i$.
\end{notation}

\begin{prop}Assume that $D$ is a division algebra over $F$ of degree $p^n$. The order of the group $\bar{\Ch}_i(\SB_1(D)$$\times $$\SB_{p^k}(D))$ is $\mu^{i+1}_{k,n}\cdot p$.
\end{prop}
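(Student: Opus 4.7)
The plan is to compute $\bar{\Ch}_i(X)$ for $X = \SB_1(D) \times \SB_{p^k}(D)$ by passing to a splitting field of $D$, identifying a Schubert basis of $\Ch_i(\bar{X})$, and then pinning down the $F$-rational part via explicit generators.

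First, I would work over a separable closure $F_{sep}/F$, where $D$ splits and $\bar{X}$ becomes $\mathbb{P}^{p^n-1} \times G(p^k, p^n)$. The group $\Ch_i(\bar{X})$ then has a standard basis of Schubert products $h^a \otimes \sigma_\lambda$, where $h$ is the hyperplane class on the projective space factor, $\sigma_\lambda$ is the Schubert class on the Grassmannian indexed by a partition $\lambda$ in the $(p^n-p^k) \times p^k$ rectangle, and the codimension condition translates into $a + |\lambda| = p^n - 1 + p^k(p^n - p^k) - i$. This explains the appearance of partitions in a rectangle of exactly the shape defining $\mu^i_{k,n}$.

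Next, I would exhibit rational classes from three natural sources: pullbacks from each factor via the two projections (using Karpenko's description of $\bar{\Ch}(\SB_1(D))$ and $\bar{\Ch}(\SB_{p^k}(D))$ for a division algebra $D$); the class of the incidence subvariety $I = \{(L,V) : L \subset V\} \subset X$, which is defined over $F$ and whose Schubert expansion is governed by Pieri-type formulas; and intersection products of the above. A bookkeeping argument should then show that these generators correspond, up to scalars, to partitions counted by $\mu^{i+1}_{k,n}$, together with one further independent generator responsible for the extra factor of $p$.

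The heart of the argument is the converse direction: ruling out any additional rational classes beyond those identified above. I would rely on a Karpenko-style divisibility criterion, namely that the coefficient of each basis element $h^a \otimes \sigma_\lambda$ in any rational class is constrained by the index of a suitable tensor power of $D$, and these indices are maximal precisely because $D$ remains a division algebra of degree $p^n$. The main obstacle lies in the combinatorial identification: explaining why the partition index shifts from $i$ to $i+1$ (which I expect to trace back to multiplication by the incidence class or by the hyperplane class in the Chow ring of $\bar{X}$), and isolating the extra independent generator that contributes the factor of $p$ on top of the partition count $\mu^{i+1}_{k,n}$.
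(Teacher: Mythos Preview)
Your plan diverges from the paper's argument and, as written, has a real gap. The paper never tries to pick out rational Schubert classes on the split product. Instead it uses a structural fact valid already over $F$: the product $\SB_1(D)\times\SB_{p^k}(D)$, viewed over $\SB_1(D)$ via the first projection, is the Grassmann bundle $\Gamma_{p^n-p^k}(\mathcal{T})$ of the tautological bundle $\mathcal{T}$ on $\SB_1(D)$ (this is \cite[Prop.~4.3]{karpizh}). Fulton's basis theorem for Grassmann bundles then gives, over $F$,
\[
\Ch_i\bigl(\SB_1(D)\times\SB_{p^k}(D)\bigr)\;\simeq\;\bigoplus_{\lambda}\Ch_{i-p^k(p^n-p^k)+|\lambda|}\bigl(\SB_1(D)\bigr),
\]
with $\lambda$ running over partitions in the $(p^n-p^k)\times p^k$ box. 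Thus the rational cycles on the product are entirely controlled by $\bar{\Ch}(\SB_1(D))$, and Karpenko's result that $\bar{\Ch}^j(\SB_1(D))=0$ for $j>0$ finishes the computation at once: only the codimension-zero summands survive, the condition $i-p^k(p^n-p^k)+|\lambda|=p^n-1$ is exactly $|\lambda|=p^n+p^k(p^n-p^k)-(i+1)$, which explains the shift to $\mu^{i+1}_{k,n}$, and each surviving summand is a copy of $\mathbb{F}_p$.

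The gap in your proposal is that you take as input ``Karpenko's description of $\bar{\Ch}(\SB_{p^k}(D))$'' for $k>0$. No such description is available; understanding the rational cycles on higher Severi-Brauer varieties is precisely the kind of question the paper is working around, and is tied to the conjecture it is trying to approach. Your divisibility bounds and incidence-class generators may well produce \emph{some} rational cycles, but you have no mechanism to prove they exhaust $\bar{\Ch}_i$ without already knowing the answer on the $\SB_{p^k}$ factor. The Grassmann bundle trick is exactly what lets the paper bypass $\bar{\Ch}(\SB_{p^k}(D))$ and reduce everything to the classical Severi-Brauer variety, where the computation is known.
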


\begin{proof}
Let $\mathcal{T}$ be the tautological vector bundle on $\SB_1(D)$. The product $\SB_1(D)$$\times$$\SB_{p^k}(D)$, considered as a $\SB_1(D)$-scheme via the first projection, is isomorphic to the Grassmann bundle $\Gamma_{p^n-p^k}(\mathcal{T})$ by \cite[Prop. 4.3]{karpizh}. The basis theorem \cite[Proposition 14.6.5]{fulton} then asserts that for any $i\geq 0$, there is a canonical isomorphism

$$\Ch{\!}_i(\SB{\!}_1(D)\times \SB{\!}_{p^k}(D))\simeq \bigoplus_{\lambda}\Ch{\!}_{i-p^k(p^n-p^k)+|\lambda|}(\SB{\!}_1(D))$$
where $\lambda$ runs through partitions $\lambda=(\lambda_1,...,\lambda_{p^n-p^k})$ with $p^k\geq \lambda_1\geq...\geq \lambda_{p^n-p^k}\geq 0$. 

By \cite[Proposition 2.1.1]{motifsev} the group $\bar{\Ch}^j(\SB_1(D))$ is trivial if $j>0$, hence extending the scalars to a splitting field of $E$, the only rational cycles that remain on the right side of the above isomorphism are the $0$-codimensional ones. The order of $\bar{\Ch}_i(\SB_1(D)$$\times$$\SB_{p^k}(D))$ is thus $\mu^{i+1}_{k,n}\cdot p$.
\end{proof}

\begin{cor}
\label{cor1}Assume that $D$ is a $p$-primary division $F$-algebra. If $E/F$ is a field extension such that $D_E$ is a division algebra, any $E$-rational cycle in $\Ch(\overline{\SB_1(D)\!\times \!\SB_{p^k}(D)})$ is $F$-rational.
\end{cor}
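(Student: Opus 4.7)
The plan is to apply the preceding proposition twice, once over $F$ and once over $E$, and then conclude by a cardinality comparison. Fix a separable closure $\bar F$ of $F$ containing (a copy of) $E$, so that $\bar F$ splits both $D$ and $D_E$, and set $X=\SB_1(D)\times \SB_{p^k}(D)$. Then $X_E\simeq \SB_1(D_E)\times \SB_{p^k}(D_E)$, and both the $F$-rational and the $E$-rational cycles on $X_{\bar F}$ sit inside the same group $\Ch_i(X_{\bar F})$. The factorization $\Ch(X)\to \Ch(X_E)\to \Ch(X_{\bar F})$ yields a chain of inclusions of images in $\Ch_i(X_{\bar F})$, the subgroup of $F$-rational cycles being contained in the subgroup of $E$-rational cycles.

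The preceding proposition, applied to the $p$-primary division $F$-algebra $D$, gives that the $F$-rational subgroup has order $\mu^{i+1}_{k,n}\cdot p$. By hypothesis $D_E$ is again a $p$-primary division algebra, of the same degree $p^n$ as $D$, so applying the same proposition to $D_E$ over $E$ (and using that $\bar F$ is a splitting field of $X_E$ as well) shows that the $E$-rational subgroup of $\Ch_i(X_{\bar F})$ has the same order $\mu^{i+1}_{k,n}\cdot p$.

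Two finite subgroups of the same finite cardinality, with one contained in the other, must coincide; consequently every $E$-rational cycle in $\Ch_i(\overline{X})$ is already $F$-rational, as required. Nothing in the argument is really difficult: the only point worth double-checking is that the two notions of rationality are compared inside a single ambient Chow group, which is exactly what the choice of the common separable closure $\bar F$ achieves, and that the numerical invariant $\mu^{i+1}_{k,n}$ produced by the previous proposition depends only on $k$ and $n=\deg(D)/p = \deg(D_E)/p$ rather than on the ground field.
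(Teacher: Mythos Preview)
Your argument is correct and is precisely the reasoning the paper has in mind: the corollary is stated in the paper without proof, as an immediate consequence of the preceding proposition, and the cardinality comparison you spell out is exactly that consequence. One small slip: in your final sentence you write $n=\deg(D)/p$, but $\deg(D)=p^n$, so $n$ is the $p$-adic valuation of $\deg(D)$, not $\deg(D)/p$; the point you need (and clearly intend) is simply that $\deg(D_E)=\deg(D)=p^n$, so the same $n$ occurs in both applications of the proposition.
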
 

Consider a Severi-Brauer variety $X$ of a $p$-primary division algebra $D$, and a field extension $E/F$ such that $D_E$ remains a division algebra. We now show that corollary \ref{cor1} implies that the Tate twisted motives of the classical Severi-Brauer variety of $D_E$ lying in the motivic decomposition of $X_E$ are defined over $F$.

\begin{prop}
\label{prop2}Let $D$ be a $p$-primary division algebra and $E/F$ a field extension such that $D_E$ remains division. For any $0< k\leq n$, the motive $(U_{k,D})_E$ does not contain a direct summand isomorphic to a Tate twist of the motive of $\SB_1(D_E)$.
\end{prop}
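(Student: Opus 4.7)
My plan is to argue by contradiction. Assume $(U_{k,D})_E$ contains a direct summand isomorphic to $M(\SB_1(D_E))(i)$ for some integer $i$. The decomposition yields morphisms $f : M(\SB_1(D_E))(i) \to (U_{k,D})_E$ and $g : (U_{k,D})_E \to M(\SB_1(D_E))(i)$ in $\CM(E;\mathbb{F}_p)$ with $g \circ f = \mathrm{id}$. Viewed as cycles on $\overline{\SB_1(D) \times \SB_{p^k}(D)}$ (or on its transpose, which is canonically identified with it via the swap), $f$ and $g$ are $E$-rational, and hence by corollary \ref{cor1} they are $F$-rational.

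Choose $F$-rational lifts $\tilde{f} \in \Ch(\SB_1(D) \times \SB_{p^k}(D))$ and $\tilde{g} \in \Ch(\SB_{p^k}(D) \times \SB_1(D))$. After replacing $\tilde{f}$ with $\pi_{k,D} \circ \tilde{f}$ and $\tilde{g}$ with $\tilde{g} \circ \pi_{k,D}$ (where $\pi_{k,D}$ denotes the projector defining $U_{k,D}$), we obtain morphisms $\tilde{f} : M(\SB_1(D))(i) \to U_{k,D}$ and $\tilde{g} : U_{k,D} \to M(\SB_1(D))(i)$ in $\CM(F;\mathbb{F}_p)$. The endomorphism $u = \tilde{g} \circ \tilde{f}$ of $M(\SB_1(D))(i)$ reduces to $\bar{g} \circ \bar{f} = \mathrm{id}$ in $\overline{\End}(M(\SB_1(D))(i))$; since the kernel of the reduction $\End \to \overline{\End}$ is a nilpotent ideal (the mechanism behind \cite[Corollary 92.5]{EKM}), $u$ is the identity plus a nilpotent, and is therefore a unit. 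In particular $\tilde{f}$ is a split monomorphism, exhibiting $M(\SB_1(D))(i)$ as a direct summand of $U_{k,D}$ over $F$.

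The indecomposability of $U_{k,D}$ combined with Krull-Schmidt forces $M(\SB_1(D))(i) \simeq U_{k,D}$. By \cite[Theorem 3.5]{upper} every indecomposable summand of $M(\SB_1(D))$ is a Tate twist of $U_{0,D}$, so $U_{k,D} \simeq U_{0,D}(j)$ for some integer $j$. But \cite[Theorem 4.1]{upper} yields $\dim(U_{k,D}) = p^k(p^n - p^k)$, which is divisible by $p$ for $k \geq 1$, whereas $\dim(U_{0,D}) = p^n - 1$ is coprime to $p$; this contradiction completes the argument. The main technical point is the rationality descent provided by corollary \ref{cor1}, which is peculiar to products with the classical Severi-Brauer variety; once the descent is available, the rest is a routine unit-modulo-nilpotent lifting combined with the indecomposability of $U_{k,D}$ and a dimension count.
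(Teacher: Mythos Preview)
Your argument is correct and follows essentially the same route as the paper: descend the cycles $f,g$ to $F$ via corollary~\ref{cor1}, compose with the projector $\pi_{k,D}$, use Rost nilpotence (the paper phrases this as a power of $g_1\circ\pi\circ\pi\circ f_1$ being the identity via \cite[Lemma~1.2]{maksim1}, you phrase it as ``identity plus nilpotent is a unit'' --- these are the same mechanism) to exhibit $M(\SB_1(D))(i)$ as a direct summand of $U_{k,D}$ over $F$, and then contradict indecomposability. The only difference is in the last line: the paper stops at ``contradicting the indecomposability of $U_{k,D}$'', implicitly invoking the earlier remark that $U_{k,D}\simeq U_{k',D}$ only if $k=k'$, whereas you spell out explicitly via the dimension count $p^k(p^n-p^k)\neq p^n-1$ why $U_{k,D}\not\simeq U_{0,D}(i)$ for $k>0$; your version is arguably cleaner on this point.
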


\begin{proof}Assume that $M(\SB_1(D_E))(i)$ is a direct summand of $(U_{k,D})_E$, that is to say that there are two morphisms $f:M(\SB_1(D_E))(i)$$\rightsquigarrow $$(U_{k,D})_E$ and $g:(U_{k,D})_E$$\rightsquigarrow $$M(\SB_1(D_E))(i)$ such that $g\circ f$ is the identity. Corollary \ref{cor1} asserts that $\overline{f}$ and $\overline{g}$ are $F$-rational, and thus there are two correspondences $f_1:M(\SB_1(D))(i)$$\rightsquigarrow$$ M(\SB_{p^k}(D))$ and $g_1:M(\SB_{p^k}(D))$$\rightsquigarrow $$M(\SB_1(D))(i)$ such that $\overline{g_1}=\overline{g}$ and $\overline{f_1}=\overline{f}$.

Let $\pi$ be the projector of $\End(M(\SB_{p^k}(D)))$ which defines its upper motive $U_{k,D}$. The two correspondences $\pi$$\circ$$ f_1:M(\SB_1(D))(i)$$\rightsquigarrow $$U_{k,D}$ and $g_1$$\circ$$ \pi:U_{k,D}$$\rightsquigarrow $$M(\SB_1(D))(i)$ satisfy $\overline{g_1}$$\circ $$\overline{\pi}$$\circ$$\overline{\pi}$$\circ$$\overline{f_1}=id$, and in particular by \cite[Lemma 1.2]{maksim1} an appropriate power $(g_1\circ $$\pi$$\circ$$\pi$$\circ f_1)^{\nu}$ is the identity of $M(\SB_1(D))(i)$. Setting $\tilde{g}=(g_1$$\circ$$ \pi$$\circ$$\pi$$\circ$$ f_1)^{\nu-1}$$\circ$$ g$$\circ$$\pi$ and $\tilde{f}=\pi$$\circ$$ f$, the correspondences $\tilde{g}$ and $\tilde{f}$ would define a direct summand of $U_{k,D}$ isomorphic to $M(\SB_1(D))(i)$, contradicting the indecomposability of $U_{k,D}$.
\end{proof}

\begin{prop}\label{type0}Let $X$ be a Severi-Brauer variety of a $p$-primary division algebra $D$. If $X_E$ is of type $0$ for any field extension $E/F$ such that $D_E$ is a division algebra, conjecture \ref{conj} holds for $X$.
\end{prop}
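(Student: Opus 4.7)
The plan is to show that every indecomposable direct summand of $M(X)$ in $\CM(F;\mathbb{F}_p)$ remains indecomposable upon extension of scalars to $E$; by the Krull--Schmidt theorem in $\CM(E;\mathbb{F}_p)$, this is exactly the lifting statement of Conjecture~\ref{conj} for $X$. I combine the type~$0$ hypothesis (applied both for $E = F$ and for the given extension $E/F$) with Proposition~\ref{prop2} and the theory of upper motives.

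Applied with $E = F$, the type~$0$ hypothesis together with \cite[Theorem~3.5]{upper} yields a decomposition
\[
M(X) = U(X) \oplus \bigoplus_i U_{0,D}(j_i) \quad \text{in } \CM(F;\mathbb{F}_p),
\]
where $U(X)$ denotes the upper motive of $X$. Since $D$ is a $p$-primary division algebra, $U_{0,D}$ coincides with the motive of $\SB_1(D)$, and each summand $U_{0,D}(j_i)$ therefore extends to $M(\SB_1(D_E))(j_i) = U_{0,D_E}(j_i)$, which is indecomposable since $D_E$ remains a division algebra. What remains is to show that $(U(X))_E$ is itself indecomposable; being upper in $M(X_E)$, it will then automatically be isomorphic to $U(X_E)$.

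Writing $U(X) \cong U_{l_0,D}(j_0)$ via \cite[Theorem~3.5]{upper}, the case $l_0 = 0$ is immediate since then $(U(X))_E \cong U_{0,D_E}(j_0)$ is indecomposable. The crucial case, and the main obstacle, is $l_0 > 0$. Here the type~$0$ hypothesis applied to $X_E$ forces every indecomposable summand of $(U(X))_E$ distinct from $U(X_E)$ to be a Tate twist of $U_{0,D_E} = M(\SB_1(D_E))$, whereas Proposition~\ref{prop2} precisely forbids any such Tate twist from appearing in $(U_{l_0,D})_E$ when $l_0 > 0$. The complement of $U(X_E)$ inside $(U(X))_E$ must therefore vanish, which yields $(U(X))_E = U(X_E)$ and completes the argument.
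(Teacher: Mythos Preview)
Your proof is correct and follows essentially the same approach as the paper: reduce to showing that $(U(X))_E$ is indecomposable, use the type~$0$ hypothesis over $E$ to constrain its possible summands, and invoke Proposition~\ref{prop2} to rule out Tate twists of $M(\SB_1(D_E))$. The only omission is the explicit reduction from arbitrary finite-field coefficients to $\mathbb{F}_p$ via Proposition~\ref{liftcoeff}, which the paper states and you assume tacitly.
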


\begin{proof}We may assume by proposition \ref{liftcoeff} that the ring of coefficients is $\mathbb{F}_p$. If $D_E$ is division, the motive of $\SB_1(D)_E$ is indecomposable in $\CM(F;\mathbb{F}_p)$ by \cite[Corollary 2.22]{upper}. Since the variety $X_E$ is of type $0$, it remains to show that the upper motive of $X$ does not contain a direct summand isomorphic to a Tate twist of $M(\SB_1(D_E))$ when extending the scalars to $E$. This is precisely proposition \ref{prop2}.
\end{proof}

To the best of our knowledge, there is no example of a Severi-Brauer variety $\SB_{p^k}(D)$ of a $p$-primary division algebra which is not of type $0$. Proposition \ref{type0} therefore gives a new insight to conjecture \ref{conj}, showing that it might be reduced to the following problem.

\begin{question}\label{q1}Is any Severi-Brauer variety $\SB_{p^k}(D)$ for a $p$-primary division algebra of type $0$?
\end{question}

Question \ref{q1} has a positive answer if $k$$=$$1$ by lemma \ref{sbktypekm1}, allowing us to prove several particular cases of conjecture \ref{conj}. In the next section, we give a positive answer to question \ref{q1} if $k$$=$$p$$=$$2$, and thus prove conjecture \ref{conj} in some other cases.

\begin{thm}\label{conjproof1}
Conjecture \ref{conj} holds for any Severi-Brauer variety $\SB_k(D)$ if either the integer $k$ is squarefree or if $k$$=$$4k'$, where $k'$ is an odd and squarefree integer.
\end{thm}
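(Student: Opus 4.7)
The strategy is to assemble the machinery of the preceding sections and reduce the claim to a case analysis on the primary decomposition of $D$ and the $p$-adic valuation of $k$. By proposition \ref{liftcoeff} it suffices to prove the statement with coefficients in $\mathbb{F}_p$ for every prime $p$. Writing the primary decomposition $D=D_p\otimes D^{(p)}$, the hypothesis that $D_E$ remains division forces both $(D_p)_E$ and $(D^{(p)})_E$ to be division. By the theory of upper motives (\cite[Theorem 3.5]{upper}), every indecomposable summand of the motive of $\SB_k(D)$ with $\mathbb{F}_p$-coefficients is a Tate twist of an upper motive of a projective $\PGL{}_1(D)$-homogeneous variety, and the analysis of these is controlled by the $p$-primary component $D_p$ and the integer $p^{v_p(k)}$, so that the decomposition takes place on Severi-Brauer varieties $\SB_{p^l}(D_p)$ with $l\leq v_p(k)$. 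The task therefore reduces to showing that $\SB_{p^{v_p(k)}}(D_p)$ is of type $0$ for every prime $p$, at which point proposition \ref{type0} delivers the conjecture.

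Suppose first that $k$ is squarefree, so that $v_p(k)\in\{0,1\}$ for each prime $p$. If $v_p(k)=0$, then the relevant variety is the classical Severi-Brauer variety $\SB_1(D_p)$, whose motive is indecomposable over any $E$ with $(D_p)_E$ division by \cite[Corollary 2.22]{upper}; it is therefore trivially of type $0$. If $v_p(k)=1$, then $\SB_p(D_p)=\SB_{p^1}(D_p)$ is of type $0$ by lemma \ref{sbktypekm1} applied with $k=1$. Both subcases are thus covered by proposition \ref{type0}, yielding the conjecture in the squarefree case.

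Suppose now that $k=4k'$ with $k'$ odd and squarefree. For every odd prime $p$ one has $v_p(k)\in\{0,1\}$, so the previous dichotomy applies unchanged. The remaining case is $p=2$, where $v_2(k)=2$, and one must prove that $\SB_4(D_2)=\SB_{2^2}(D_2)$ is of type $0$ for the $2$-primary division algebra $D_2$. Lemma \ref{sbktypekm1} only gives type $1$, so the new content is to exclude any Tate twist of $U_{1,D_2}$ from the motivic decomposition of $\SB_4(D_2)$. This is precisely a positive answer to question \ref{q1} in the case $k=p=2$, which will be the announced content of the next section of the paper.

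The main obstacle is this last step. Whereas the squarefree case and the odd-prime piece of the $k=4k'$ case follow from an assembly of already proven results, the exclusion of $U_{1,D_2}$ as a summand of $\SB_4(D_2)$ is the substantive new input. The expected approach, in the spirit of corollary \ref{cor1} and proposition \ref{prop2}, combines a careful analysis of the $F$-rational cycles on $\SB_2(D_2)\times \SB_4(D_2)$ with the maximal-dimension property of $U_{1,D_2}$ supplied by \cite[Theorem 4.1]{upper}, so as to rule out mutually inverse morphisms that would realise $U_{1,D_2}$ as a direct summand of the motive of $\SB_4(D_2)$.
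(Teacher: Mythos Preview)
Your reduction is correct and follows the paper's own route: pass to $\mathbb{F}_p$-coefficients via proposition \ref{liftcoeff}, reduce to the $p$-primary part of $D$ and to $k=p^{v_p(k)}$ via the theory of upper motives, and then invoke proposition \ref{type0} once the relevant variety is known to be of type $0$. The squarefree case and the odd-prime contribution in the $k=4k'$ case are handled exactly as in the paper, via \cite[Corollary 2.22]{upper} and lemma \ref{sbktypekm1}.

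Where your proposal diverges is in the final paragraph, where you guess at the mechanism for showing $\SB_4(D_2)$ is of type $0$. The paper does \emph{not} proceed by analysing $F$-rational cycles on $\SB_2(D_2)\times\SB_4(D_2)$ in the style of corollary \ref{cor1} and proposition \ref{prop2}. Instead, proposition \ref{types-2} proves the stronger statement that $\SB_{2^k}(D)$ is of type $k-2$ for every $k$, by induction on $\deg D$: one passes to the function field of $X=\SB_{2^{n-1}}(D)$, uses the relative cellular decomposition of \cite[Theorem 10.13]{flag} to write $M(\SB_{2^k}(D))_{F(X)}$ as a sum of products $M(\SB_i(C))\times M(\SB_j(C))$, and then combines lemma \ref{sbc} with a dimension count to see that no twist of $U_{k-1,C}\oplus U_{k-1,C}(2^{n+k-2})$ can fit inside the only candidate summand $M(\SB_{2^{k-1}}(C)\times\SB_{2^{k-1}}(C))$. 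This geometric-plus-dimension argument is quite different from the rational-cycle approach you anticipate, and in fact yields corollary \ref{indec} (indecomposability of $M(\SB_2(D))$) as a byproduct.
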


\begin{proof}We first deal with the case where $k$ is squarefree. By the theory of upper motives and proposition \ref{liftcoeff}, we may assume that $D$ is $p$-primary, that the ring of coefficients is $\mathbb{F}_p$ and that $k$ equal $1$ or $p$. The result is known for $\SB_1(D)$ by \cite[Corollary 2.22]{upper} and for any field extension $E/F$ such that $D_E$ remains a division algebra, the variety $\SB_p(D_E)$ is of type $0$ by lemma \ref{sbktypekm1}. Proposition \ref{type0} thus shows that conjecture \ref{conj} holds for $\SB_p(D)$. The proof in the case where $k$ is the product of $4$ and a squarefree odd integer is given at the end of the next section.
\end{proof}

\textbf{Motivic rigidity if $p=2$.} We know provide the needed material to prove conjecture \ref{conj} for the varieties $\SB_k(D)$, where $k$ is the product of $4$ and a squarefree odd integer. The main tool is the following proposition, which asserts that for any $2$-primary division algebra, the variety $\SB_4(D)$ is of type $0$.

\begin{prop}\label{types-2}Consider a division algebra $D$ over $F$ of degree $2^n$ and an integer $0$$<$$ k$$\leq$$ n$. The Severi-Brauer variety $\SB_{2^k}(D)$ is of type $k-2$.
\end{prop}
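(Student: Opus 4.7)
By Lemma \ref{sbktypekm1}, the variety $\SB_{2^k}(D)$ is already known to be of type $k-1$, so the new content to establish is that no Tate twist $U_{k-1,D}(i)$ appears as an indecomposable direct summand of $M(\SB_{2^k}(D))$ in $\CM(F;\mathbb{F}_2)$. My plan is to argue by induction on $n$; the base case $k=n$ is immediate because $\SB_{2^n}(D)=\Spec F$ has Tate motive.

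For the inductive step, I would assume the proposition for every division algebra of degree smaller than $2^n$ and argue by contradiction: suppose $U_{k-1,D}(i)$ is an indecomposable direct summand of $M(\SB_{2^k}(D))$ for some integer $i$. Setting $E=F(\SB_{2^{n-1}}(D))$, the algebra $D_E$ is Brauer-equivalent to a division algebra $C$ of degree $2^{n-1}$ over $E$, and applying Lemma \ref{sbc} with $k-1$ in place of $k$ exhibits inside $(U_{k-1,D})_E$ a direct summand isomorphic to $U_{k-1,C}\oplus U_{k-1,C}(2^{n+k-2})$. Extending the assumed summand to $E$ and invoking Krull--Schmidt then yields two indecomposable summands of $M(\SB_{2^k}(D))_E$ isomorphic to $U_{k-1,C}$ whose Tate twists differ by exactly $2^{n+k-2}$.

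The second half of the argument would exploit the independent decomposition $M(\SB_{2^k}(D))_E = M(\SB_{2^k}(M_2(C)))$ from \cite[Theorem 10.13]{flag}, which writes this motive as a direct sum of Tate twists of motives of products $\SB_{i_1}(C)\times \SB_{i_2}(C)$ with $i_1+i_2=2^k$. Each such product is projective homogeneous for the inner-type group $\PGL{}_1(C)\times \PGL{}_1(C)$, so by \cite[Theorem 3.5]{upper} combined with the inductive hypothesis applied to the Severi-Brauer varieties of $C$, its indecomposable summands are Tate twists of upper motives of products of Severi-Brauer varieties of $C$, which in turn reduce to the $U_{l,C}$'s via Künneth-style identifications and the equality $U(\SB_j(C))=U_{v_2(j),C}$. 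I would enumerate in this way the full set $S$ of Tate twists at which $U_{k-1,C}$ occurs as a summand of $M(\SB_{2^k}(D))_E$, and complete the proof by checking that $S$ contains no two elements whose difference equals $2^{n+k-2}$.

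The principal obstacle is the combinatorial verification of this final numerical claim. For each pair $(i_1,i_2)$ with $i_1+i_2=2^k$ one must identify explicitly the Tate twists at which $U_{k-1,C}$ appears inside $M(\SB_{i_1}(C)\times\SB_{i_2}(C))$, keep track of the external shifts coming from \cite[Theorem 10.13]{flag}, and verify that the resulting set $S$ avoids the gap $2^{n+k-2}$. The prime $p=2$ intervenes crucially here, both through the two-term shape of Lemma \ref{sbc} (rather than a $p$-term sum for odd $p$) and through the pair-indexed Karpenko decomposition, which together make the combinatorics work out; one can already check by hand in the small cases $n=2,k=1$ and $n=3,k=1$ that the $U_{0,C}$-twists span too narrow a range to accommodate the gap $2^{n+k-2}$, and the inductive argument should promote this to general $n$ and $k$.
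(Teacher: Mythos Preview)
Your overall architecture matches the paper's: induction on $n$, pass to $E=F(\SB_{2^{n-1}}(D))$, invoke Lemma~\ref{sbc} to produce a summand $U_{k-1,C}\oplus U_{k-1,C}(2^{n+k-2})$ inside $M(\SB_{2^k}(D))_E$, and compare against the decomposition of \cite[Theorem 10.13]{flag}. The gap is in the final step. You propose to enumerate the full set $S$ of Tate twists at which $U_{k-1,C}$ occurs and then verify the combinatorial claim that no two differ by $2^{n+k-2}$; you rightly flag this as the principal obstacle, and indeed you do not carry it out.

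The paper bypasses this enumeration entirely. The point is that \cite[Theorem 3.5]{upper} already tells you that an indecomposable summand of $N_{(i,j)}=M(\SB_i(C)\times\SB_j(C))$ can be a twist of $U_{k-1,C}$ only when $v_2(\gcd(i,j))\geq k-1$. Since $i+j=2^k$, this forces $(i,j)\in\{(2^k,0),(0,2^k),(2^{k-1},2^{k-1})\}$: just three terms, not the full sum. The induction hypothesis (applied to $C$, of degree $2^{n-1}$, with the same $k$) says $\SB_{2^k}(C)$ is of type $k-2$, so $N_{(2^k,0)}=N_{(0,2^k)}=M(\SB_{2^k}(C))$ contributes no twist of $U_{k-1,C}$ at all. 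Hence the \emph{entire} pair $U_{k-1,C}\oplus U_{k-1,C}(2^{n+k-2})$ would have to sit inside the single piece $N_{(2^{k-1},2^{k-1})}$. Now a one-line dimension count kills this: $\dim(\SB_{2^{k-1}}(C)\times\SB_{2^{k-1}}(C))=2^{n+k-1}-2^{2k-1}$, while the pair has dimension $2^{n+k-2}+\dim(\SB_{2^{k-1}}(C))=2^{n+k-1}-2^{2k-2}$, which is strictly larger. No K\"unneth identifications or twist bookkeeping are needed.

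One minor point: your ``base case $k=n$'' is not the base of the induction on $n$ (that is $n=1$); rather, $k=n$ is the trivial case you peel off at each stage so that the induction hypothesis, which concerns $C$ of degree $2^{n-1}$, applies to $\SB_{2^k}(C)$ with $k\leq n-1$.
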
 

\begin{proof}Recall that the variety $\SB_{2^k}(D)$ is of type $k$$-$$1$ by lemma \ref{sbktypekm1}. To show that $\SB_{2^k}(D)$ is of type $k$$-$$2$, we show that there is no indecomposable factor isomorphic to a twist of $U_{k-1,D}$ in the motivic decomposition of $M(\SB_{2^k}(D))$ in $\CM(F;\mathbb{F}_2)$.

The proof goes by induction on $n$, the result being clear if $n=1$. Assume that the result is proved for every division algebra of degree $2^{n-1}$ and suppose that there is a division algebra $D$ of degree $2^n$ and a Severi-Brauer variety $\SB_{2^k}(D)$ whose motivic decomposition contains a direct summand isomorphic to a Tate twist of $U_{k-1,D}$.

We denote by $X$ the variety $\SB_{2^{n-1}}(D)$, and by $C$ the (uniquely defined up to isomorphism and of degree $2^{n-1}$) division algebra Brauer-equivalent to $D_{F(X)}$. The complete motivic decomposition of $M(\SB_{p^k}(D))_{F(X)}$ is a refinement of the following decomposition, which is given by \cite[Theorem 10.13]{flag} and was already used in the proof of lemma \ref{sbc} (here, the prime $p$ is $2$)

$$M(\SB{\!}_{2^k}(D))_{F(X)}=\bigoplus_{i+j=2^k}\left(M(\SB{\!}_i(C))\times M(\SB{\!}_j(C))\right)(i(2^{n-1}-j)).~~~~\mbox{($\ast$)}$$

By assumption, a twist of the motive $(U_{k-1,D})_{F(X)}$ is a direct summand of $M(\SB_{2^k}(D))_{F(X)}$, and in particular by lemma \ref{sbc} the motive $M(\SB_{2^k}(D))_{F(X)}$ contains a direct summand isomorphic to a twist of $U_{k-1,C}\oplus U_{k-1,C}(2^{n+k-2})$.

By \cite[Theorem 3.5]{upper}, any indecomposable direct summand of $N_{(i,j)}$$=$$M(\SB_i(C)$$\times $$\SB_j(C))$ is isomorphic to a twist of $U_{l,C}$, with $l$$\leq $$v_2(\gcd(i,j))$. The Krull-Schmidt theorem thus implies that in the decomposition $(\ast$), a twist of the motive $U_{k-1,C}$ must be a direct summand of either $N_{(2^k,0)}$, $N_{(0,2^k)}$ or $N_{(2^{k-1},2^{k-1})}$. By induction hypothesis the variety $\SB_{2^k}(C)$ is of type $k-2$, hence there is no twist of $U_{k-1,C}$ in the decomposition of $N_{(2^k,0)}$$=$$N_{(0,2^k)}$$=$$M(\SB_{2^k}(C))$. In particular a twist of the motive $U_{k-1,C}\oplus U_{k-1,C}(2^{n+k-2})$ must be a direct summand of $N_{(2^{k-1},2^{k-1})}$$=$$M(\SB_{2^{k-1}}(C)$$\times$$ \SB_{2^{k-1}}(C))$. However computing the dimension of those motives, we see that $\dim(\SB_{2^{k-1}}(C)\times \SB_{2^{k-1}}(C))=2^{n+k-1}-2^{2k-1}$ is strictly lesser than $\dim(U_{k-1,C}\oplus U_{k-1,C}(2^{n+k-2}))$$=$$2^{n+k-2}$$+$$\dim(\SB_{2^{k-1}}(C))$$=$$2^{n+k-1}$$-$$2^{2k-2}$. The variety $\SB_{2^k}(D)$ is therefore of type $k-2$.
\end{proof}

\begin{cor}[{\cite[Theorem 4.2]{upper}}]\label{indec}Assume that $D$ is a $2$-primary division algebra and $K$ is a field of characteristic $2$. The motive of $\SB_2(D)$ is an indecomposable object of $\CM(F;K)$.
\end{cor}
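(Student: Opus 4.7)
The plan is to first reduce the claim to the prime field $\mathbb{F}_2$ and then invoke Proposition \ref{types-2} essentially directly. For the reduction, the (unlabelled) corollary stated immediately after Proposition \ref{coeff1} asserts that a projective homogeneous variety has indecomposable motive in $\CM(F;\mathbb{F}_p)$ if and only if its motive is indecomposable in $\CM(F;K)$ for every field $K$ of characteristic $p$. It therefore suffices to establish that $M(\SB_2(D))$ is indecomposable in $\CM(F;\mathbb{F}_2)$.

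The heart of the proof is to apply Proposition \ref{types-2} with $k=1$: since $D$ is $2$-primary of degree $2^n$ with $n\geq 1$, the variety $\SB_2(D)=\SB_{2^1}(D)$ is of type $1-2=-1$. Unfolding the definition of type, every indecomposable direct summand of $M(\SB_2(D))$ in $\CM(F;\mathbb{F}_2)$ must be isomorphic to the upper motive $U(\SB_2(D))$, because the alternative option (a Tate twist of $U_{l,D}$ with $l\leq -1$) is vacuous: $U_{l,D}$ is only defined for non-negative $l$. The final step of the proof of Lemma \ref{sbktypekm1} then supplies uniqueness: the Chow group $\Ch^0(\SB_2(D))$ has rank one, so at most one indecomposable direct summand of $M(\SB_2(D))$ can be isomorphic to $U(\SB_2(D))$. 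Combining these two observations, the motivic decomposition of $M(\SB_2(D))$ over $\mathbb{F}_2$ consists of a single indecomposable piece, and $M(\SB_2(D))$ is therefore indecomposable.

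All the work is concentrated in Proposition \ref{types-2}, which is already established; the corollary itself is essentially a bookkeeping consequence of that proposition together with the change-of-coefficients reduction. I do not foresee any genuine obstacle beyond being careful with the indexing conventions (in particular the fact that ``type $-1$'' forces every indecomposable summand to be the upper motive itself, with no Tate twist permitted).
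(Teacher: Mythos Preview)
Your argument is correct and follows the same route as the paper: reduce to $\mathbb{F}_2$ via the change-of-coefficients results and then read off indecomposability from Proposition~\ref{types-2} with $k=1$. The paper compresses the second step into the single phrase ``this directly follows from Proposition~\ref{types-2}''; you have simply unpacked what ``type $-1$'' means together with the rank-one $\Ch^0$ argument from Lemma~\ref{sbktypekm1}, which is exactly the implicit reasoning.
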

\begin{proof}By proposition \ref{coeff1}, it suffices to show that the motive of $M(\SB_2(D))$ is indecomposable in $\CM(F;\mathbb{F}_2)$. This directly follows from proposition \ref{types-2}.
\end{proof}

\begin{proof}[End of the proof of Theorem 4.9]
Now assume that $k$ is the product of $4$ and an odd squarefree number. If the ring of coefficients is a field of odd characteristic $p$, the proof is the same as if $k$ was squarefree. For $p=2$, we may assume that $k=4$ and $D$ is a $2$-primary division algebra by \cite[Theorem 4.1]{upper}. By proposition \ref{types-2} for any extension $E/F$ such that $D_E$ is division, the variety $\SB_4(D_E)$ is of type $0$, thus conjecture $\ref{conj}$ holds for $\SB_4(D)$ by proposition \ref{type0}.
\end{proof}

\textbf{Aknowledgements :}
I would like to express my gratitude to Nikita Karpenko for raising this question and for the helpful remarks. I also would like to thank Tsit Yuen Lam and Maksim Zhykhovich for useful discussions.

\begin{flushleft} 
\noindent Charles De Clercq\\
\texttt{email} : \texttt{declercq@math.jussieu.fr}\\
\textit{Address} : Universit\'e Paris VI, 4, place Jussieu, 75252 Paris CEDEX 5.
\end{flushleft}
\end{document}